%%
%% Copyright 2007, 2008, 2009 Elsevier Ltd
%%
%% This file is part of the 'Elsarticle Bundle'.
%% ---------------------------------------------
%%
%% It may be distributed under the conditions of the LaTeX Project Public
%% License, either version 1.2 of this license or (at your option) any
%% later version.  The latest version of this license is in
%%    http://www.latex-project.org/lppl.txt
%% and version 1.2 or later is part of all distributions of LaTeX
%% version 1999/12/01 or later.
%%
%% The list of all files belonging to the 'Elsarticle Bundle' is
%% given in the file `manifest.txt'.
%%

%% Template article for Elsevier's document class `elsarticle'
%% with numbered style bibliographic references
%% SP 2008/03/01
%%
%%
%%
%% $Id: elsarticle-template-num.tex 4 2009-10-24 08:22:58Z rishi $
%%
%%
%\documentclass[preprint,12pt]{elsarticle}

%% Use the option review to obtain double line spacing
\documentclass[preprint,review,12pt]{elsarticle}

%% Use the options 1p,twocolumn; 3p; 3p,twocolumn; 5p; or 5p,twocolumn
%% for a journal layout:
%% \documentclass[final,1p,times]{elsarticle}
%% \documentclass[final,1p,times,twocolumn]{elsarticle}
%% \documentclass[final,3p,times]{elsarticle}
%% \documentclass[final,3p,times,twocolumn]{elsarticle}
%% \documentclass[final,5p,times]{elsarticle}
%% \documentclass[final,5p,times,twocolumn]{elsarticle}

%% if you use PostScript figures in your article
%% use the graphics package for simple commands
%% \usepackage{graphics}
%% or use the graphicx package for more complicated commands
%% \usepackage{graphicx}
%% or use the epsfig package if you prefer to use the old commands
%% \usepackage{epsfig}

%% The amssymb package provides various useful mathematical symbols
\usepackage{amssymb}
%% The amsthm package provides extended theorem environments
\usepackage{amsthm}

\usepackage{amsmath}
\usepackage{hyperref}

%% The lineno packages adds line numbers. Start line numbering with
%% \begin{linenumbers}, end it with \end{linenumbers}. Or switch it on
%% for the whole article with \linenumbers after \end{frontmatter}.
\usepackage{lineno}

% converts eps's into pdf
\usepackage{epstopdf}
\DeclareGraphicsRule{.tif}{png}{.png}{`convert #1 `basename #1 .tif`.png}

\usepackage{subfigure}

%% natbib.sty is loaded by default. However, natbib options can be
%% provided with \biboptions{...} command. Following options are
%% valid:

%%   round  -  round parentheses are used (default)
%%   square -  square brackets are used   [option]
%%   curly  -  curly braces are used      {option}
%%   angle  -  angle brackets are used    <option>
%%   semicolon  -  multiple citations separated by semi-colon
%%   colon  - same as semicolon, an earlier confusion
%%   comma  -  separated by comma
%%   numbers-  selects numerical citations
%%   super  -  numerical citations as superscripts
%%   sort   -  sorts multiple citations according to order in ref. list
%%   sort&compress   -  like sort, but also compresses numerical citations
%%   compress - compresses without sorting
%%
%% \biboptions{comma,round}

% \biboptions{}

\newtheorem{theorem}{Theorem}
\newtheorem{lemma}[theorem]{Lemma}
\newtheorem{assumption}{Assumption}
\newtheorem{remark_temp}[theorem]{Remark}
\newenvironment{remark}{\begin{remark_temp} \upshape}{ \end{remark_temp}}

\journal{Acta Astronautica}

\begin{document}

\begin{frontmatter}

%% Title, authors and addresses

%% use the tnoteref command within \title for footnotes;
%% use the tnotetext command for the associated footnote;
%% use the fnref command within \author or \address for footnotes;
%% use the fntext command for the associated footnote;
%% use the corref command within \author for corresponding author footnotes;
%% use the cortext command for the associated footnote;
%% use the ead command for the email address,
%% and the form \ead[url] for the home page:
%%
%% \title{Title\tnoteref{label1}}
%% \tnotetext[label1]{}
%% \author{Name\corref{cor1}\fnref{label2}}
%% \ead{email address}
%% \ead[url]{home page}
%% \fntext[label2]{}
%% \cortext[cor1]{}
%% \address{Address\fnref{label3}}
%% \fntext[label3]{}

\title{Robust Three-axis Attitude Stabilization for Inertial Pointing Spacecraft Using Magnetorquers}

%% use optional labels to link authors explicitly to addresses:
%% \author[label1,label2]{<author name>}
%% \address[label1]{<address>}
%% \address[label2]{<address>}

\author{Fabio Celani}

\address{Department of Astronautical, Electrical, and Energy Engineering\\
Sapienza University of Rome\\
Via Salaria 851, 00138 Roma, Italy\\
fabio.celani@uniroma1.it}

\begin{abstract}
%% Text of abstract

In this work feedback control laws are designed for achieving three-axis attitude stabilization of inertial pointing spacecraft using only magnetic torquers. 
The designs are based on an almost periodic model of geomagnetic field along the spacecraft's orbit.  Both attitude plus  attitude rate feedback, and attitude only feedback  are proposed. Both feedback laws achieve local exponential stability robustly with respect to large uncertainties in the spacecraft's inertia matrix. The latter properties are proved using general averaging and Lyapunov stability.  Simulations are included to validate the effectiveness of the proposed control algorithms.

\end{abstract}

\begin{keyword}
%% keywords here, in the form: keyword \sep keyword
attitude control \sep magnetic actuators \sep averaging \sep Lyapunov stability.

%% MSC codes here, in the form: \MSC code \sep code
%% or \MSC[2008] code \sep code (2000 is the default)

\end{keyword}

\end{frontmatter}

%%
%% Start line numbering here if you want
%%
%\linenumbers

%% main text

\section{Introduction}

SpacecraftÕs attitude control can be obtained by adopting several mechanisms. Among them electromagnetic actuators are widely used for generation of attitude control torques on small satellites flying low Earth orbits. They consist of planar current-driven coils rigidly placed on the spacecraft typically along three orthogonal axes, and they operate on the basis of the interaction between the magnetic moment generated by those coils and the Earth's magnetic field; in fact, the interaction with the Earth's field generates a torque that attempts to align the total magnetic moment in the direction of the field.
The interest in such devices, also known as magnetorquers, is due to the following reasons: (i) they are simple, reliable, and low cost (ii) they need only renewable electrical power to be operated; (iii) using magnetorquers it is possible to modulate smoothly the control torque so that unwanted couplings with flexible modes, which could harm pointing precision, are not induced; (iv) magnetorquers save system weight  with respect to any other class of actuators.  On the other hand, magnetorquers have the important limitation that control torque is constrained to belong to the plane orthogonal to the Earth's magnetic field. As a result, different types of actuators often accompany magnetorquers to provide full three-axis control, and a considerable amount of work has been dedicated to the design of magnetic control laws in the latter setting (see e.g.  \cite{Silani:2005kx, Pittelkau:1993fk,  Lovera:2002uq, pulecchi_cst10} and references therein). 

Recently, three-axis attitude control using only magnetorquers has been considered as a feasible option especially for low-cost micro-satellites. Different control laws have been obtained; many of them are designed using a periodic approximation of the time-variation of the geomagnetic field along the orbit, and in such scenario  stability and disturbance attenuation have been achieved using results from linear periodic systems (see e.g. \cite{Wisniewski:1999uq, Psiaki:2001yu, Reyhanoglu:2009uq}); however, in \cite{Lovera:2004vn} and \cite{Lovera:2005fk}  stability has been achieved even when a non periodic, and thus more accurate, approximation of the geomagnetic field is adopted. In both works 
feedback control laws that require measures of both attitude and attitude-rate (i.e. state feedback control laws) are proposed; moreover, in \cite{Lovera:2004vn}  
feedback control algorithms  which need measures of attitude only (i.e. output feedback control algorithms) are presented, too.
All the control algorithms in \cite{Lovera:2004vn} and \cite{Lovera:2005fk} require exact knowledge of the spacecraft's inertia matrix; however, because the moments and products of inertia of the spacecraft may be uncertain or may change due
to fuel usage and articulation, the inertia matrix of a spacecraft is often subject to large uncertainties; as a result, it is important to determine control algorithms which achieve attitude stabilization in spite of those uncertainties.

In this work we present  control laws  obtained by modifying those in \cite{Lovera:2004vn} and \cite{Lovera:2005fk}, which achieve local exponential stability in spite of large uncertainties on the inertia matrix. The latter results are derived adopting an almost periodic model of the geomagnetic field along the spacecraft's orbit. As in \cite{Lovera:2004vn} and \cite{Lovera:2005fk} the main tools used in the stability proofs are general averaging and Lyapunov stability (see \cite{khalil00}).

The rest of the paper is organized as follows. Section \ref{modeling} introduces the models adopted for the spacecraft and for the Earth's magnetic field. Control design of both state and output feedbacks are reported in Section \ref{control_design} along with stability proofs. Simulations of the obtained control laws are presented in Section 
\ref{simulations}.

\subsection{Notations} For $x \in \mathbb{R}^{n}$, $\|x\|$ denotes the Eucledian norm of $x$; for a square matrix $A$, $\lambda_{min}(A)$ and $\lambda_{max}(A)$ denote the minimum and maximum eigenvalue of $A$ respectively; $\|A\|$ denotes the 2-norm of $A$ which is equal to $\|A\|=[\lambda_{max}(A^T A)]^{1/2}$. Symbol $I$ represents the identity matrix. For $a \in \mathbb{R}^3$, $a^{\times}$ represents the skew symmetric matrix 
\begin{equation} \label{skew_symmetric}
a^{\times} =
\left[
\begin{array}{rcl}
  0 & -a_3 & a_2\\
  a_3 & 0 & -a_1\\
  -a_2 & a_1 & 0
\end{array}
\right]
\end{equation} so that
for $b  \in \mathbb{R}^3$, the multiplication $a^{\times}b$ is equal to the cross product $a \times b$.

\section{Modeling} \label{modeling}

In order to describe the attitude dynamics of an Earth-orbiting rigid spacecraft, and in order to represent the geomagnetic field, it is useful to introduce the following reference frames.

\begin{enumerate}
  \item \emph{Earth-centered inertial frame} $\mathcal{F}_i$. A commonly used inertial frame for Earth orbits is the Geocentric Equatorial Frame, whose origin is in the Earth's center, its $x_i$ axis is the vernal equinox direction, its $z_i$ axis coincides with the Earth's axis of rotation and points northward,  and its $y_i$ axis completes an orthogonal right-handed frame (see \cite[Section 2.6.1]{sidi97} ).
   \item \emph{Spacecraft body frame} $\mathcal{F}_b$. The origin of this right-handed orthogonal frame attached to the spacecraft, coincides with the satellite's center of mass; its axes are chosen so that the inertial pointing objective is having  $\mathcal{F}_b$ aligned with $\mathcal{F}_i$.
\end{enumerate}

Since the inertial pointing objective consists in aligning $\mathcal{F}_b$ to $\mathcal{F}_i$, the focus will be on the relative kinematics and dynamics of the satellite with respect to the inertial frame. Let 
%\begin{equation*}
$q=[q_1\ q_2\ q_3\ q_4]^T=[q_v^T\ q_4]^T$
%\end{equation*}
with $\|q\|=1$ be the unit quaternion representing rotation of  $\mathcal{F}_b$ with respect to $\mathcal{F}_i$; then, the corresponding attitude matrix  is given by 
 \begin{equation} \label{A_bo}
A(q)=(q_4^2-q_v^T q_v)I+2 q_v q_v^T-2 q_4 q_v^{\times}
\end{equation}
(see \cite[Section 5.4]{wie98}).

Let
\begin{equation} \label{w}
W(q)=\dfrac{1}{2}\left[\begin{array}{c}q_4 I + q_v^{\times} \\-q_v^T\end{array}\right]
\end{equation}
Then the relative attitude kinematics is given by 
\begin{equation} \label{kinem_body}
\dot q = W(q) \omega
\end{equation} where $\omega \in \mathbb{R}^{3}$ is the angular rate of $\mathcal{F}_b$ with respect to $\mathcal{F}_i$ resolved in $\mathcal{F}_b$ (see \cite[Section 5.5.3]{wie98}).

The attitude dynamics in body frame can be expressed by 
\begin{equation} \label{euler_eq}
J \dot \omega =- \omega^{\times} J \omega + T
\end{equation} where $J \in \mathbb{R}^{3 \times 3}$ is the spacecraft inertia matrix, and $T \in \mathbb{R}^{3}$ is the vector of external torque expressed in $\mathcal{F}_b$ (see \cite[Section 6.4]{wie98}). As stated in the introduction, here we consider $J$ uncertain since the moments and products of inertia of the spacecraft may be uncertain or may change due to fuel usage and articulation; however, we require to know a lower bound and an upper bound for the spacecraft's principal moments of inertia; those bounds usually can be determined in practice without difficulties. Thus, the following assumption on $J$ is made.

\begin{assumption} \label{uncetain_inertia}
The inertia matrix $J$ is unknown, but  bounds $0<J_{min} \leq J_{max}$ such that the following hold
\begin{equation} \label{J_min}
  0 <J_{min} \leq \lambda_{min}(J) \leq \lambda_{max}(J)=\|J\| \leq J_{max}
\end{equation} are known.
\end{assumption}

The spacecraft is equipped with three magnetic coils aligned with the $\mathcal{F}_b$ axes which generate the magnetic attitude control torque
\begin{equation} \label{t_coils}
T=m_{coils} \times B^b=-B^{b \times}\ m_{coils}
\end{equation} where $m_{coils} \in \mathbb{R}^3$ is the vector of magnetic moments for the three coils, and $B^b$ is the geomagnetic field at spacecraft expressed in body frame $\mathcal{F}_b$. From the previous equation, we see that magnetic torque can only be perpendicular to geomagnetic field.

Let $B^i$ be the geomagnetic field at spacecraft expressed in inertial frame $\mathcal{F}_i$. Note that $B^i$ varies with time both because of the spacecraft's motion along the orbit and because of time variability of the geomagnetic field. Then
%\begin{equation*}
$B^b(q,t)=A(q)B^i(t)$
%\end{equation*}  
which shows explicitly the dependence of $B^b$ on both $q$ and $t$.

Grouping together equations (\ref{kinem_body}) (\ref{euler_eq}) (\ref{t_coils}) the following nonlinear time-varying system is obtained
\begin{equation} \label{nonlin_tv}
\begin{array}{rcl}
  \dot q &=& W(q) \omega\\
  J \dot \omega &=& - \omega^{\times} J \omega -B^{b}(q,t)^{\times}\ m_{coils} \end{array}
\end{equation} in which $m_{coils}$ is the control input.

In order to design control algorithms, it is important to characterize the time-dependence of $B^b(q,t)$ which is the same as  characterizing the time-dependence of $B^i(t)$. Adopting the so called dipole model of the geomagnetic field (see \cite[Appendix H]{wertz78}) we obtain  \begin{equation} \label{geomegnetic_model}
B^i(t)=\frac{\mu_m}{\|r^i(t)\|^3}[3( \hat m^i(t)^T \hat{r}^i(t)) \hat{r}^i - \hat m^i(t)  ]
\end{equation}
In equation (\ref{geomegnetic_model}), $\mu_m$
is the total dipole strength, $r^i(t)$ is the spacecraft's position vector resolved in $\mathcal{F}_i$, and $\hat r^i(t)$ is the vector of the direction cosines of $r^i(t)$; finally $\hat m^i(t)$ is the vector of the direction cosines of the Earth's magnetic dipole expressed in $\mathcal{F}_i$  which is set equal to
\begin{equation} \label{m_hat}
\hat m^i(t) = \left[\begin{array}{c} \sin(\theta_m) \cos(\omega_{e}t + \alpha_0) \\ \sin(\theta_m) \sin(\omega_{e}t + \alpha_0) \\ \cos(\theta_m) \end{array}\right]
\end{equation} where $\theta_m$ is the dipole's coelevation, 
$\omega_{e}=360.99$ deg/day is the Earth's average rotation rate, and $\alpha_0$ is the right ascension of the dipole at time $t=0$; clearly, in equation (\ref{m_hat}) Earth's rotation has been taken into account. It has been obtained that for year 2010 $\mu_m=7.746\ 10^{15}$ Wb m and $\theta_m=170.0^{\circ}$ (see \cite{Rodriguez-Vazquez:2012fk}); then, as it is well known, the Earth's magnetic dipole is tilted with respect to Earth's axis of rotation.

Equation (\ref{geomegnetic_model}) shows that in order to characterize the time dependence of $B^i(t)$ it is necessary to determine an expression for  $r^i(t)$ which is the spacecraft's position vector resolved in $\mathcal{F}_i$.  Assume that the orbit is circular, and define a coordinate system $x_p$, $y_p$ in the orbital's plane whose origin is at Earth's center; then, the position of satellite's center of mass is clearly given by
\begin{equation} \label{x_p_y_p}
\begin{array}{rcl}
  x^p(t) &=& R \cos(nt + \phi_0)\\
  y^p(t) &=& R \sin(nt + \phi_0)
\end{array}
\end{equation} where $R$ is the radius of the circular orbit, $n$ is the orbital rate, and $\phi_0$ an initial phase. Then, coordinates of the satellite in inertial frame $\mathcal{F}_i$ can be easily obtained from (\ref{x_p_y_p}) using an appropriate rotation matrix which depends on the orbit's inclination $incl$ and on the right ascension of the ascending node $\Omega$ (see \cite[Section 2.6.2]{sidi97}). Plugging  into (\ref{geomegnetic_model}) the expression of those coordinates and equation (\ref{m_hat}), an explicit expression for $B^i(t)$ can be obtained; it can be easily checked that $B^i(t)$ turns out to be a linear combination of sinusoidal functions of $t$ having different frequencies. As a result, $B^i(t)$ is an almost periodic function of $t$ (see \cite[Section 10.6]{khalil00}), and consequently system (\ref{nonlin_tv}) is an almost periodic nonlinear system.

\section{Control design} \label{control_design}

As stated before, the control objective is driving the spacecraft so that $\mathcal{F}_b$ is aligned with $\mathcal{F}_i$. From (\ref{A_bo}) it follows that $A(q)=I$ for $q=[q_v^T\ q_4]^T=\pm \bar q$ where $\bar q= [0\ 0\ 0\ 1]^T$. Thus,  the objective is designing control strategies for $m_{coils}$ so that $q_v \rightarrow 0$ and $\omega \rightarrow 0$. Here we will present feedback laws that locally exponentially stabilize equilibrium $(q,\omega)=(\bar q, 0)$.

First, since $B^b$ can be measured using magnetometers, apply the following preliminary control which enforces that  $m_{coils}$ is orthogonal to $B^b$
\begin{equation} \label{m_coils}
m_{coils}=B^b(q,t) \times u= B^{b}(q,t)^{\times} u=- (B^{b}(q,t)^{\times})^ T u
\end{equation} where $u \in \mathbb{R}^3$ is a new control vector.  Then, it holds that
\begin{equation} \label{omega_dot}
\begin{array}{rcl}
  \dot q &=& W(q) \omega\\
  J \dot \omega &=& - \omega^{\times} J \omega + \Gamma^b(q,t) u
\end{array}
\end{equation} where
\begin{equation} \label{gamma_b}
\Gamma^b(q,t)= (B^{b}(q,t)^{\times}) (B^{b }(q,t)^{\times})^ T=B^b(q,t)^T B^b(q,t) I-B^b(q,t) B^b(q,t)^T
\end{equation}
Let
\begin{equation} \label{gamma_i}
\Gamma^i(t)=(B^{i}(t)^{\times}) (B^{i}(t)^{\times})^ T=B^i(t)^T B^i(t) I-B^i(t) B^i(t)^T
\end{equation}
then it is easy to verify that
\begin{equation*}
\Gamma^b(q,t)=A(q) \Gamma^i(t) A(q)^T
\end{equation*} 
so that (\ref{omega_dot}) can be written as
\begin{equation} \label{dynamic_o}
\begin{array}{rcl}
  \dot q &=& W(q) \omega\\
  J \dot \omega &=& - \omega^{\times} J \omega + A(q) \Gamma^i(t) A(q)^T u
\end{array}
\end{equation}

Since $B^i(t)$ is a linear combination of sinusoidal functions of $t$ having different frequencies, so is $\Gamma^i(t)$. As a result, the following average
\begin{equation} \label{gamma_i_av}
\Gamma^i_{av}= \lim_{T \rightarrow \infty} \frac{1}{T} \int_{0}^{T}\Gamma^i(\tau) d \tau
\end{equation} is well defined. Consider the following assumption on $\Gamma^i_{av}$ .

\begin{assumption} \label{assumption_gamma_i} The spacecraft's orbit satisfies condition
$\Gamma^i_{av}>0$.

 \end{assumption}

\begin{remark} \label{discuss_assumption2} Since $\Gamma^i(t) \geq 0$ (see (\ref{gamma_i})), Assumption \ref{assumption_gamma_i} is equivalent to requiring that $\det(\Gamma^i_{av}) \neq 0$.
The expression of $\det(\Gamma^i_{av})$ based on the model of the geomagnetic field presented in the previous section is quite complex, and it is not easy to get an insight from it; however, if coelevation of Earth's magnetic dipole $\theta_m=170.0^{\circ}$ is approximated to 
$\theta_m=180^{\circ}$ deg, which corresponds to having Earth's magnetic dipole aligned with Earth's rotation axis, then the geomagnetic field in a fixed point of the orbit becomes constant with respect to time (see (\ref{geomegnetic_model}) and (\ref{m_hat})); consequently $B^i(t)$, which represents the geomagnetic field along the orbit, becomes periodic, and the expression of $\det(\Gamma^i_{av})$ simplifies as follows
\begin{equation} \label{det_Gamma_o_av_simple}
\det(\Gamma^i_{av})=\frac{9 \mu_m^6}{1024\ R^{18}} [345 - 92 \cos(2 \ incl)+3 \cos(4 \ incl)] \sin(incl)^2
\end{equation}
Thus, in such simplified scenario issues on fulfillment of Assumption \ref{assumption_gamma_i} arise only for low inclination orbits.
\end{remark}

\subsection{State feedback}

In this subsection, a stabilizing static state (i.e. attitude and attitude rate) feedback for system (\ref{dynamic_o}) is presented. It is obtained as a simple modification of the one proposed in  \cite{Lovera:2005fk}. The important property that is achieved through such modification is robustness with respect to uncertainties on the inertia matrix; that is, the  modified control algorithm achieves stabilization for all $J$'s that fulfill Assumption \ref{uncetain_inertia}.

\begin{theorem} \label{exponential_stability}
Consider the magnetically actuated spacecraft 
described by (\ref{dynamic_o}) with uncertain inertia matrix $J$ satisfying Assumption \ref{uncetain_inertia}. Apply the following proportional derivative control law
\begin{equation} \label{control_u}
u=-(\epsilon^2 k_1 q_v + \epsilon k_2 \omega)
\end{equation}
with $k_1>0$ and $k_2>0$. Then, under Assumption \ref{assumption_gamma_i}, there exists $\epsilon^*>0$ such that for any $0<\epsilon<\epsilon^*$,  equilibrium $(q,\omega)=(\bar q, 0)$ is locally exponentially stable for (\ref{dynamic_o}) (\ref{control_u}).
\end{theorem}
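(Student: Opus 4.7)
The plan is to recast the closed-loop system in the standard slow-time averaging form, identify its autonomous averaged system, prove local exponential stability of the latter via a Lyapunov argument that is valid uniformly over all admissible inertia matrices, and finally invoke Khalil's averaging theorem for almost-periodic vector fields (Section 10.6 of \cite{khalil00}) to transfer the conclusion back to the original time-varying system. Concretely, I first substitute the feedback (\ref{control_u}) into (\ref{dynamic_o}) and rescale the angular velocity by $\tilde\omega := \omega/\epsilon$, obtaining
\begin{equation*}
\dot q = \epsilon\, W(q)\tilde\omega, \qquad J\dot{\tilde\omega} = \epsilon\Bigl[-\tilde\omega^\times J\tilde\omega - A(q)\Gamma^i(t) A(q)^T\bigl(k_1 q_v + k_2\tilde\omega\bigr)\Bigr],
\end{equation*}
which has the canonical form $\dot x = \epsilon f(x,t,\epsilon)$ with $t$ entering only through the almost-periodic matrix $\Gamma^i(t)$.

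The averaged system is then obtained by replacing $\Gamma^i(t)$ with $\Gamma^i_{av}$, which is well defined and positive definite by Assumption \ref{assumption_gamma_i}. For the resulting autonomous system I would take the Lyapunov candidate
\begin{equation*}
V(q_v,\tilde\omega) = k_1\, q_v^T \Gamma^i_{av}\, q_v + \frac{1}{2}\tilde\omega^T J\tilde\omega + \alpha\, q_v^T J\tilde\omega,
\end{equation*}
with a small parameter $\alpha>0$ to be fixed later. Under Assumptions \ref{uncetain_inertia} and \ref{assumption_gamma_i}, $V$ is positive definite and quadratically bounded in $(q_v,\tilde\omega)$ whenever $\alpha$ is below a threshold depending only on $k_1$, $J_{min}$, $J_{max}$ and $\lambda_{min}(\Gamma^i_{av})$. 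Differentiating $V$ along the averaged dynamics, the identity $\tilde\omega^T \tilde\omega^\times J\tilde\omega = 0$, the symmetry of $\Gamma^i_{av}$, and the local expansion $\dot q_v = \tfrac{1}{2}\epsilon\tilde\omega + O(\|(q_v,\tilde\omega)\|^2)$ cause the potentially sign-indefinite cross terms to cancel at leading order, leaving a dissipation dominated by $-\epsilon k_2\,\tilde\omega^T\Gamma^i_{av}\tilde\omega - \alpha\epsilon k_1\, q_v^T\Gamma^i_{av} q_v$, plus remainders that are $O(\alpha)$ or cubic. Choosing $\alpha$ sufficiently small and restricting attention to a neighborhood of $(\bar q,0)$ renders $\dot V$ negative definite, uniformly over $J_{min} I \leq J \leq J_{max} I$, whence local exponential stability of the averaged system.

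Applying the averaging theorem for almost-periodic systems then furnishes $\epsilon^*>0$ such that for all $0<\epsilon<\epsilon^*$ the equilibrium $(\bar q,0)$ of the $(q,\tilde\omega)$ system is locally exponentially stable, and undoing the rescaling $\omega = \epsilon\tilde\omega$ preserves exponential stability for the original $(q,\omega)$ dynamics. The main obstacle I anticipate is the Lyapunov step: because the feedback gains $k_1$ and $k_2$ are independent of $J$, the weight $\alpha$ must be selected once and for all using only the bounds $J_{min}, J_{max}$ and the structural constant $\lambda_{min}(\Gamma^i_{av})$, and the delicate point is verifying that a single such $\alpha$ keeps $V$ positive definite and $\dot V$ negative definite for every admissible $J$ in the convex set of matrices satisfying Assumption \ref{uncetain_inertia}.
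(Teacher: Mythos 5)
Your proposal is correct and follows essentially the same route as the paper: the same $\omega/\epsilon$ rescaling into slow-time averaging form, the same averaged system with $\Gamma^i_{av}>0$, and the same $J$-uniform Lyapunov function $k_1 q_v^T\Gamma^i_{av}q_v+\frac{1}{2}\tilde\omega^T J\tilde\omega$ plus a small cross term fixed using only $J_{min}$, $J_{max}$ and $\lambda_{min}(\Gamma^i_{av})$. The only difference is presentational: you invoke Khalil's general averaging theorem as a black box, whereas the paper carries out that step explicitly on the linearization by constructing the near-identity transformation $T(t,\epsilon)=I+\epsilon U(t)$ with $U$ built from the bounded antiderivative of $\Gamma^i(t)-\Gamma^i_{av}$ and bounding the resulting perturbation with the same Lyapunov function.
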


\begin{proof}

In order to prove local exponential stability of equilibrium $(q,\omega)=(\bar q, 0)$, it suffices considering the restriction of  (\ref{dynamic_o}) (\ref{control_u}) to the open set $\mathbb{S}^{3+} \times \mathbb{R}^3$ where 
\begin{equation} \label{S3}
\mathbb{S}^{3+}=\{ q \in \mathbb{R}^4\  |\  \|q\|=1,\ q_{4}>0\} 
\end{equation} On the latter set the following holds
\begin{equation} \label{z14}  
q_{4}=(1-q_{v}^T q_{v})^{1/2}
\end{equation} Consequently, the restriction of (\ref{dynamic_o}) (\ref{control_u}) to  $\mathbb{S}^{3+} \times \mathbb{R}^3$ is given by the following reduced order system
\begin{equation} \label{closed_loop_reduced}
\begin{array}{rcl}
  \dot q_{v} &=& W_v(q_{v}) \omega\\
  J \dot \omega &=& - \omega^{\times} J \omega - A_v(q_v) \Gamma^i(t) A_v(q_v)^T (\epsilon^2 k_1 q_v + \epsilon k_2 \omega)
\end{array}
\end{equation}
where
\begin{equation} \label{w_v}
W_v(q_{v})=\dfrac{1}{2} \left[ \left( 1-q_{v}^T q_{v} \right)^{1/2} I + q_{v}^{\times} \right]
\end{equation} and
 \begin{equation} \label{a_v}
A_v(q_{v})=\left( 1-2 q_{v}^T q_{v} \right) I+2 q_{v} q_{v}^T-2 \left( 1- q_{v}^T q_{v} \right)^{1/2} q_{v}^{\times}
\end{equation}

Consider the linear approximation of (\ref{closed_loop_reduced})  around $(q_v,\omega)=(0, 0)$ which is given by
\begin{equation} \label{linearized_closed_loop_reduced}
\begin{array}{rcl}
  \dot q_{v} &=& \dfrac{1}{2} \omega\\
   \dot \omega &=& - J^{-1} \Gamma^i(t) (\epsilon^2 k_1 q_v + \epsilon k_2 \omega)
\end{array}
\end{equation}

Introduce the following  state-variables' transformation 
\begin{equation*}
  z_1=q_v \ \ \ \  z_2=\omega /\epsilon
\end{equation*} with $\epsilon>0$ so that system (\ref{closed_loop_reduced}) is transformed into
\begin{equation} \label{linearized_time_varying_s}
\begin{array}{rcl}
  \dot z_1 &=& \dfrac{\epsilon}{2} z_2\\
  \dot z_2 &=& - \epsilon J^{-1}  \Gamma^i(t) ( k_1 z_1 +  k_2 z_2)
 \end{array}
\end{equation}

Rewrite system (\ref{linearized_time_varying_s}) in the following matrix form
\begin{equation} \label{time_var_syst}
\dot z = \epsilon A(t) z
\end{equation} where
\begin{equation*}
A(t)=\left[\begin{array}{cc}0 & \frac{1}{2} I \\ -k_1 J^{-1}  \Gamma^i(t)  & -k_2 J^{-1}  \Gamma^i(t)
\end{array}\right]
\end{equation*} and consider the so called time-invariant ``average system''  of (\ref{time_var_syst})
 \begin{equation}  \label{aver_syst}
\dot z = \epsilon A_{av} z 
\end{equation} with
\begin{equation*}
A_{av}=\left[\begin{array}{cc}0 & \frac{1}{2} I \\ -k_1 J^{-1}  \Gamma^i_{av}  & -k_2 J^{-1}  \Gamma^i_{av}
\end{array}\right]
\end{equation*} where $\Gamma^i_{av}$ was defined in (\ref{gamma_i_av}) (see \cite[Section 10.6]{khalil00} for a general definition of average system).

We will show that after having performed an appropriate coordinate transformation, system (\ref{time_var_syst}) can be seen as a perturbation of (\ref{aver_syst}) (see \cite[Section 10.4]{khalil00}). For that purpose note that since $\Gamma^i(t)$ is a linear combination of sinusoidal functions of $t$ having different frequencies, then there exists $k_{\Delta}>0$ such that the following holds
\begin{equation*}
\left \|  \dfrac{1}{T} \int_0^T  \Gamma^i(\tau) d\tau - \Gamma^i_{av} \right \| \leq k_{\Delta} \dfrac{1}{T} \ \ \ \ \forall \ T>0
\end{equation*}  Let
\begin{equation*}
\Delta(t)= \int_0^t (\Gamma^i(\tau)-\Gamma^i_{av}) d\tau
\end{equation*} then for $t>0$
\begin{equation*}
\left \| \Delta(t) \right\|=t \left \| \left[ \frac{1}{t} \int_0^t \Gamma^i(\tau) d\tau -\Gamma^i_{av} \right] \right\| \leq k_{\Delta} 
\end{equation*} hence
\begin{equation} \label{Delta_bound}
\left \| \Delta(t) \right\| \leq k_{\Delta} \ \ \ \ \forall t \geq 0
\end{equation}
Let
\begin{equation} \label{matrix_U}
U(t)=\int_0^t [A(\tau)-A_{av}] d \tau = \left[\begin{array}{cc}0 & 0 \\ -k_1 J^{-1}  \Delta(t)  & -k_2 J^{-1}  \Delta(t)
\end{array}\right]
\end{equation} and observe that the following holds
\begin{equation} \label{bound_on_U1}
\| U(t) \| \leq \sqrt{3}\ (k_1+k_2) \| J^{-1} \| \  \| \Delta(t) \| \ \ \ \ \forall t \geq 0
\end{equation} Observe that from (\ref{J_min}) it follows that
\begin{equation} \label{J_inv}
\| J^{-1} \| = \frac{1}{\lambda_{min}(J)} \leq \frac{1}{J_{min}}
\end{equation} thus
\begin{equation} \label{bound_on_U}
\| U(t) \| \leq \frac{\sqrt{3}\  (k_1+k_2) k_{\Delta}}{J_{min}} \ \ \ \ \forall t \geq 0
\end{equation}
Now consider the transformation matrix
\begin{equation} \label{def_T}
T(t,\epsilon)=I+\epsilon U(t)=\left[\begin{array}{cc}I & 0 \\ - \epsilon k_1 J^{-1}  \Delta(t)  & I- \epsilon k_2 J^{-1}  \Delta(t)
\end{array}\right]
\end{equation}
Since (\ref{bound_on_U}) holds, if $\epsilon$ is small enough, then $T(t,\epsilon)$ is non singular for all $t \geq 0$.
Thus, we can define the coordinate transformation
\begin{equation*}
w= T(t,\epsilon)^{-1} z
\end{equation*} In order to compute the state equation of system (\ref{time_var_syst}) in the new coordinates it is convenient to consider the inverse transformation
\begin{equation*}
z= T(t,\epsilon)w
\end{equation*} and differentiate with respect to time both sides obtaining
\begin{equation*}
\epsilon A(t)T(t,\epsilon)w=\frac{\partial T}{\partial t}(t,\epsilon) w + T(t,\epsilon) \dot w
\end{equation*} Consequently
\begin{equation} \label{w_dot_1}
\dot w = T(t,\epsilon)^{-1} \left[ \epsilon A(t)T(t,\epsilon)-\frac{\partial T}{\partial t}(t,\epsilon) \right] w
\end{equation}
Observe that
\begin{equation*}
T(t,\epsilon)^{-1}=\left[\begin{array}{cc}I & 0\\
(I-\epsilon k_2 J^{-1} \Delta(t))^{-1}  \epsilon k_1 J^{-1}  \Delta(t) & (I- \epsilon k_2 J^{-1}  \Delta(t) )^{-1}
\end{array}\right]
\end{equation*} By using Lemma \ref{lemma_appendix}, it is immediate to obtain that for $\epsilon$ sufficiently small, matrix $(I- \epsilon k_2 J^{-1}  \Delta(t) )^{-1}$ can be expressed as follows
\begin{equation*}
(I- \epsilon k_2 J^{-1}  \Delta(t))^{-1} = I + \epsilon k_2 J^{-1}  \Delta(t) (I- \tilde \epsilon k_2 J^{-1}  \Delta(t) )^{-2}  
\end{equation*} where $0 < \tilde \epsilon < \epsilon$. As a result $T(t,\epsilon)^{-1}$ can be written as
\begin{equation} \label{T_inv}
T(t,\epsilon)^{-1}=I+\epsilon S(t,\epsilon)
\end{equation} with
\begin{equation*}
S(t,\epsilon)=\left[\begin{array}{cc}0 & 0 \\ (I- \epsilon k_2 J^{-1} \Delta(t))^{-1}   k_1 J^{-1}  \Delta(t)  &  k_2 J^{-1}  \Delta(t) (I- \tilde \epsilon k_2 J^{-1}  \Delta(t) )^{-2} 
\end{array}\right]
\end{equation*} Observe that since (\ref{Delta_bound}) (\ref{J_inv}) (\ref{ineq1}) (\ref{ineq2}) hold,  for $\epsilon$ sufficiently small $S(t,\epsilon)$ is bounded for all $t \geq 0$. Moreover, from (\ref{matrix_U}) and (\ref{def_T}) obtain the following
\begin{equation} \label{dT_deps}
\frac{\partial T}{\partial t}(t,\epsilon)=\epsilon \frac{\partial U}{\partial t}(t,\epsilon)=\epsilon(A(t)-A_{av})
\end{equation} Then, from (\ref{def_T}) (\ref{w_dot_1}) (\ref{T_inv}) (\ref{dT_deps}) we obtain
\begin{equation} \label{perturb_syst}
\dot w = \epsilon [A_{av}+ \epsilon H(t,\epsilon)]w
\end{equation} where
\begin{equation*}
H(t,\epsilon)=A(t)U(t)+S(t,\epsilon) A_{av}+\epsilon S(t,\epsilon) A(t) U(t)
\end{equation*} Thus we have shown that in coordinates $w$ system (\ref{time_var_syst}) is a perturbation of  system (\ref{aver_syst}); moreover,  clearly, for the perturbation factor $H(t,\epsilon)$ it occurs that for $\epsilon$ small enough there exists $k_H>0$ such that
\begin{equation} \label{bound_on_H}
\| H(t,\epsilon) \| \leq k_H \ \ \ \ \forall t \geq 0
\end{equation} 
Let us focus on system
\begin{equation} \label{wdot}
\dot w = A_{av} w
\end{equation} which in expanded form reads as follows
\begin{equation} \label{linearized_time_varying}
\begin{array}{rcl}
  \dot w_1 &=& \dfrac{1}{2} w_2\\
   J \dot w_2 &=&  - \Gamma^i_{av} ( k_1 w_1 +  k_2 w_2)
 \end{array}
\end{equation} Consider the candidate Lyapunov function for system (\ref{linearized_time_varying}) (see \cite{bullo96})
% and \cite[Appendix B]{Whitcomb:1993hc}) 
\begin{equation} \label{def_V}
V(w_1,w_2)=k_1 w_1^T \Gamma^i_{av} w_1 + 2 \beta w_1^T J w_2 + \frac{1}{2} w_2^T J w_2
\end{equation} with $\beta>0$. 
Note that
\begin{multline*}
V(w_1,w_2) \geq k_1 \lambda_{min}(\Gamma^i_{av}) \| w_1 \|^2 - 2 \beta \| J \| \| w_1 \| \| w_2 \| + \frac{1}{2} \lambda_{min}(J) \| w_2 \|^2\\
 \geq \left( k_1 \lambda_{min}(\Gamma^i_{av})- \beta J_{max} \right)\| w_1 \|^2 + \left(  \frac{1}{2}  J_{min} - \beta J_{max} \right) \| w_2 \|^2
\end{multline*} Thus for $\beta$ small enough, $V$ is positive definite for all $J$'s satisfying Assumption \ref{uncetain_inertia}. 
Moreover, the following holds
\begin{multline*}
\dot V(w_1,w_2) =-2 \beta k_1 w_1^T \Gamma^i_{av} w_1 - 2 \beta k_2 w_1^T \Gamma^i_{av} w_2 - k_2 w_2^T \Gamma^i_{av} w_2 +\beta k_2 w_2^T J w_2\\
\leq -2 \beta k_1 \lambda_{min}(\Gamma^i_{av}) \| w_1 \|^2 + 2 \beta k_2 \|  \Gamma^i_{av}\| \| w_1 \| \| w_2 \| - k_2  \lambda_{min}(\Gamma^i_{av}) \| w_2 \|^2+\beta k_2  \| J \| \| w_2 \|^2\\
\end{multline*}
Use the following Young's inequality
\begin{equation} \label{young_ineq}
2 \| w_1 \| \| w_2 \| \leq \frac{k_1 \lambda_{min}(\Gamma^i_{av})}{k_2 \|  \Gamma^i_{av}\|} \| w_1 \|^2 + \frac{k_2 \|  \Gamma^i_{av}\|}{k_1 \lambda_{min}(\Gamma^i_{av})} \| w_2 \|^2
\end{equation} so to obtain
\begin{equation} \label{V_dot}
\dot V(w_1,w_2) \leq - \beta \lambda_{min}(\Gamma^i_{av}) \| w_1 \|^2 -
\left[  k_2  \lambda_{min}(\Gamma^i_{av}) - \beta \left( \frac{k_2^2 \|  \Gamma^i_{av}\|}{k_1 \lambda_{min}(\Gamma^i_{av})} + J_{max}\right) \right] \| w_2 \|^2
\end{equation} Thus, for $\beta$ small enough $\dot V$ is negative definite and system (\ref{wdot}) is exponentially stable for all $J$'s satisfying Assumption \ref{uncetain_inertia}. Then, fix $\beta$ so that for all $J$'s that satisfy Assumption \ref{uncetain_inertia}, $V$ is positive definite and $\dot V$ is negative definite, and
 rewrite the Lyapunov function $V$ (see (\ref{def_V})) in the following compact form 
\begin{equation*}
V(w_1,w_2)=w^T P w
\end{equation*}
 where clearly
\begin{equation*}
P=\left[ \begin{array}{cc}
 k_1 \Gamma^i_{av} & \beta J \\ 
\beta J  &  \dfrac{1}{2}J
\end{array} \right]
\end{equation*} Then, note that the following holds
\begin{equation} \label{bound_on_P}
\| P \| \leq k_P
\end{equation}
with
\begin{equation}
k_P=\sqrt{3}\ \left[  k_1 \| \Gamma^i_{av} \| + \left( 2 \beta+\frac{1}{2} \right) J_{max}\right] 
\end{equation}
Moreover, from equation (\ref{V_dot}) it follows immediately that there exists $k_V>0$ such that
\begin{equation} \label{bound_on_V}
\dot V(w_1,w_2) = 2 w^T P A_{av} w \leq - k_V \| w \|^2
\end{equation}
 Now for system (\ref{perturb_syst}) consider the same Lyapunov function $V$ used for system (\ref{wdot}); the derivative of $V$ along the trajectories of  (\ref{perturb_syst})  is given by
\begin{equation*}
\dot V (w_1,w_2)=  \epsilon [2 w^T P A_{av} w+  2 \epsilon w^T P H(t,\epsilon)w]
\end{equation*}
Thus, using (\ref{bound_on_H}) (\ref{bound_on_P}) (\ref{bound_on_V}) we obtain that for $\epsilon$ small enough the following holds
\begin{equation*}
\dot V(w_1,w_2) \leq  \epsilon [-k_V +2 \epsilon k_P k_H] \| w \|^2 
\end{equation*} Thus for $\epsilon$ sufficiently small system (\ref{perturb_syst}) is exponentially stable. As a result, for the same values of $\epsilon$ equilibrium $(q_v,\omega)=(0,0)$ is  exponentially stable for  (\ref{linearized_closed_loop_reduced}), and consequently $(q_v,\omega)=(0,0)$ is locally exponentially stable for the nonlinear system (\ref{closed_loop_reduced}). From equation (\ref{z14}) it follows that given $d <1$, there exists $L>0$ such that
\begin{equation*}
\left| q_{4}-1 \right| \leq L \| q_{v} \|   \ \ \ \forall\ \| q_{v} \| < d
\end{equation*}
Thus, exponential stability of $(q,\omega)=(\bar q,0)$ for (\ref{dynamic_o}) (\ref{control_u}) can be easily obtained.

\end{proof}

\begin{remark} \label{robustness}
Given an inertia matrix $J$ it is relatively simple to show that there exists $\epsilon^*>0$ such that setting $0<\epsilon <\epsilon^*$ the closed-loop system (\ref{dynamic_o}) (\ref{control_u}) is locally exponentially stable at $(q,\omega)=(\bar q, 0)$ \footnote{The actual computation of $\epsilon^*$ is not trivial most of the times (see for example \cite{Rossa:2012ly}).}. It turns out that the value of $\epsilon^*>0$ depends on  $J$; consequently, if $J$ is uncertain, $\epsilon^*$ cannot be determined. However, the previous Theorem has shown that even in the case of unkown $J$, if  bounds $J_{min}$ and $J_{max}$ on its principal moments of inertia are known, then it is possible to determine an $\epsilon^*>0$ such that picking $0<\epsilon <\epsilon^*$ local exponential stability is guaranteed for all $J$'s satisfying those bounds.
\end{remark}

\begin{remark} \label{discuss_assumption1}
Assumption \ref{assumption_gamma_i}  represents an average controllability condition in the following sense. Note that, as a consequence of the fact  that magnetic torques can only be perpendicular to the geomagnetic field, it occurs that 
matrix $\Gamma^i(t)$ is singular for each $t$ since $\Gamma^i(t) B^i(t)=0$ (see (\ref{gamma_i})); thus, system (\ref{dynamic_o}) is not fully controllable at each time instant; as a result, having $\det(\Gamma^i_{av}) \neq 0$ can be interpreted as the ability in the average system to apply magnetic torques in any direction.
\end{remark}

%\begin{remark}
%It is straightforward to show that  the same stabilization result can be obtained for  equilibrium $(q, \omega)=(-\bar q,0)$ by using the same control law with $k_1 < 0$.
%\end{remark}

\begin{remark}
The obtained robust stability result hold even if saturation on magnetic moments is taken into account by replacing control (\ref{m_coils}) with
\begin{equation} \label{m_coils_sat}
m_{coils}=m_{coils\ max}\  \text{sat}\left( \frac{1}{m_{coils\ max}} B^b(q,t) \times u \right)
\end{equation} where $m_{coils\ max}$ is the saturation limit on each magnetic  moment, and $\text{sat}: \mathbb{R}^3 \rightarrow \mathbb{R}^3$ is the standard saturation function defined as follows; given $x \in \mathbb{R}^3$, the $i$-th component of $\text{sat}(x)$
is equal to $x_i$ if $|x_i| \leq 1$, otherwise it is equal to either 1 or -1 depending on the sign of $x_i$. The previous theorem still holds because saturation does not modify the linearized system (\ref{linearized_closed_loop_reduced}).
\end{remark}

\begin{remark} \label{choosing_gains}
In practical applications values for gains $k_1$, $k_2$ can be chosen by trial and error following standard guidelines used in proportional-derivative control. For selecting $\epsilon$ in principle we could proceed as follows; determine $\epsilon^*$ by following the procedure presented in the previous proof and pick $0<\epsilon<\epsilon^*$. However, if it is too complicated to follow that approach, an appropriate value for $\epsilon$ could be found by trial and error as well.
\end{remark}

\subsection{Output feedback}

Being able to achieve stability without using attitude rate measures is important from a practical point of view since rate gyros consume power and  increase cost and weight more than the devices needed to implement extra control logic.

In the following thorem we propose a dynamic output (i.e. attitude only) feedback  that is obtained as a simple modification of the output feedback presented in  \cite{Lovera:2004vn}. As in the case of state feedback, the important property that is achieved through such modification is robustness with respect to uncertainties on the inertia matrix.

\begin{theorem} \label{output_feedback}
Consider the magnetically actuated spacecraft 
described by (\ref{dynamic_o}) with uncertain inertia matrix $J$ satisfying Assumption \ref{uncetain_inertia}. Apply the following dynamic attitude feedback control law
\begin{equation} \label{output_feedback_law}
\begin{array}{rcl}
\dot \delta &=& \alpha (q- \epsilon \lambda \delta)\\
u &=& -\epsilon^2 \left( k_1 q_v +  k_2 \alpha \lambda W(q)^{T} (q-\epsilon \lambda \delta) \right)  
\end{array}
\end{equation}
with $\delta \in \mathbb{R}^4$, $k_1>0$, $k_2>0$, $\alpha>0$, and $\lambda>0$. Then, under Assumption \ref{assumption_gamma_i}, there exists $\epsilon^*>0$ such that for any $0<\epsilon<\epsilon^*$,  equilibrium $(q,\omega,\delta)=(\bar q, 0,\frac{1}{\epsilon \lambda} \bar q)$ is locally exponentially stable for (\ref{dynamic_o}) (\ref{output_feedback_law}).
\end{theorem}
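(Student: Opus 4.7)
The plan is to adapt, step by step, the strategy used in the proof of Theorem~\ref{exponential_stability}: restrict to the chart $q_4>0$, shift and linearize around the closed-loop equilibrium, rescale so that general averaging applies, and close with a quadratic Lyapunov function for the averaged system that is valid uniformly over every $J$ satisfying Assumption~\ref{uncetain_inertia}.

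The first move is to absorb the $\epsilon$-dependence hidden in the target point $\delta=\bar q/(\epsilon\lambda)$ by introducing the filter variable $\eta = q - \epsilon\lambda\delta$, which satisfies $\dot\eta = W(q)\omega - \epsilon\lambda\alpha\eta$ and has equilibrium $\eta=0$; in the new variables the control reads $u = -\epsilon^2(k_1 q_v + k_2\alpha\lambda W(q)^T\eta)$. Restricting to the chart $q_4 = (1-\|q_v\|^2)^{1/2}$, linearizing around $(\bar q,0,0)$, and noticing that the scalar component $\eta_4$ decouples and decays according to $\dot\eta_4 = -\epsilon\lambda\alpha\eta_4$, yields the nine-dimensional reduced linear time-varying system
\begin{equation*}
\dot q_v = \tfrac12\omega,\qquad J\dot\omega = -\epsilon^2\Gamma^i(t)\bigl(k_1 q_v + \tfrac{k_2\alpha\lambda}{2}\eta_v\bigr),\qquad \dot\eta_v = \tfrac12\omega - \epsilon\lambda\alpha\eta_v.
\end{equation*}
The rescaling $z_1 = q_v$, $z_2 = \omega/\epsilon$, $z_3 = \eta_v$ then brings this to the form $\dot z = \epsilon A(t)z$ with $A(t)$ depending on $t$ only through $\Gamma^i(t)$, so the averaging manipulations of Theorem~\ref{exponential_stability} apply verbatim: the bound $\|\Delta(t)\|\le k_\Delta$ on $\Delta(t) = \int_0^t(\Gamma^i - \Gamma^i_{av})\,d\tau$ yields a near-identity transformation $T(t,\epsilon) = I + \epsilon U(t)$ that recasts the time-varying system as $\dot w = \epsilon[A_{av}+\epsilon H(t,\epsilon)]w$ with $H$ uniformly bounded in $t$.

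The crux, and the main obstacle, is to construct a quadratic Lyapunov function for the averaged system
\begin{equation*}
\dot w_1 = \tfrac12 w_2,\qquad J\dot w_2 = -\Gamma^i_{av}\bigl(k_1 w_1 + \tfrac{k_2\alpha\lambda}{2}w_3\bigr),\qquad \dot w_3 = \tfrac12 w_2 - \lambda\alpha w_3,
\end{equation*}
that is positive definite with negative definite derivative uniformly in $J$. Unlike the state-feedback case, the $w_2$-equation contains no direct damping term; damping is supplied only indirectly through the filter variable $w_3$. To expose it I would introduce $\zeta = w_2 - 2\lambda\alpha w_3$, whose dynamics
\begin{equation*}
\dot\zeta = -\lambda\alpha\zeta - J^{-1}\Gamma^i_{av}\bigl(k_1 w_1 + \tfrac{k_2\alpha\lambda}{2}w_3\bigr),\qquad \dot w_3 = \tfrac12\zeta
\end{equation*}
exhibit the damping $-\lambda\alpha\zeta$ explicitly. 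A natural candidate is
\begin{equation*}
V = k_1 w_1^T\Gamma^i_{av}w_1 + \tfrac{k_2\alpha\lambda}{2}\,w_3^T\Gamma^i_{av}w_3 + \tfrac12\zeta^T J\zeta + 2\beta\,w_1^T J\zeta + 2\mu\,w_3^T J\zeta,
\end{equation*}
whose derivative along trajectories decomposes into the three principal contributions $-2\beta k_1 w_1^T\Gamma^i_{av}w_1$, $-\mu k_2\alpha\lambda\,w_3^T\Gamma^i_{av}w_3$ and $-(\lambda\alpha-\beta-\mu)\zeta^T J\zeta$ plus three cross terms of type $w_1^T\Gamma^i_{av}w_3$, $w_1^T J\zeta$ and $w_3^T J\zeta$ that must be dominated by repeated application of Young's inequality. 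The small parameters $\beta,\mu>0$ have to be tuned carefully so that the resulting quadratic form is negative definite for every $J$ compatible with Assumption~\ref{uncetain_inertia}; since only $\lambda_{min}(\Gamma^i_{av})$, $\|\Gamma^i_{av}\|$, $J_{min}$, $J_{max}$ and the fixed gains enter the algebra, a single choice of $\beta,\mu$ will work simultaneously for every admissible $J$, yielding $V>0$ and $\dot V\le -k_V\|w\|^2$ with $k_V$ independent of $J$.

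The remainder is routine: apply this Lyapunov function to $\dot w = \epsilon[A_{av}+\epsilon H(t,\epsilon)]w$ exactly as in Theorem~\ref{exponential_stability} to conclude exponential stability of the full linear time-varying system for all sufficiently small $\epsilon$, revert the transformations $\zeta\mapsto w_2\mapsto\omega$, $w\mapsto z$ and $\eta\mapsto\delta$, append the decoupled exponentially stable $\eta_4$ equation, and lift from the chart $q_4>0$ back to the full quaternion representation to obtain local exponential stability of the equilibrium $(q,\omega,\delta) = (\bar q,0,\bar q/(\epsilon\lambda))$.
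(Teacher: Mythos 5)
Your overall architecture is exactly the paper's: restrict to the chart $q_4>0$, linearize about the shifted equilibrium, rescale $\omega/\epsilon$, apply the general-averaging transformation $T(t,\epsilon)=I+\epsilon U(t)$, and finish with a $J$-uniform quadratic Lyapunov function for the averaged system. Your reduced, linearized, rescaled and averaged systems all agree with (\ref{closed_loop_reduced_o})--(\ref{w_system}) (your $\eta_v$ is the paper's $z_3=q_v-\epsilon\lambda\delta_v$, and your decoupled $\eta_4$ equation is the paper's $\dot w_4=-\alpha\lambda w_4$). The gap is in the crux step, the Lyapunov function for the averaged system. With your variables $\dot w_1=\tfrac12 w_2=\tfrac12\zeta+\lambda\alpha w_3$, so the term $k_1w_1^T\Gamma^i_{av}w_1$ contributes $2k_1\lambda\alpha\,w_1^T\Gamma^i_{av}w_3$ to $\dot V$; carrying out the full computation gives
\begin{multline*}
\dot V=-2\beta k_1 w_1^T\Gamma^i_{av}w_1-\mu k_2\alpha\lambda\,w_3^T\Gamma^i_{av}w_3-(\lambda\alpha-\beta-\mu)\zeta^TJ\zeta\\
+(2k_1\lambda\alpha-\beta k_2\alpha\lambda-2\mu k_1)\,w_1^T\Gamma^i_{av}w_3-2\beta\lambda\alpha\,w_1^TJ\zeta+2(\beta-\mu)\lambda\alpha\,w_3^TJ\zeta .
\end{multline*}
The $w_1^T\Gamma^i_{av}w_3$ cross term has the $\beta,\mu$-independent coefficient $2k_1\lambda\alpha$, while the two diagonal terms it must be weighed against are only $O(\beta)$ and $O(\mu)$. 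Concretely, evaluating at $\zeta=0$, $w_1=w_3=v$ (i.e.\ $w_2=2\lambda\alpha v$) gives $\dot V=\bigl[2k_1\lambda\alpha-(\beta+\mu)(2k_1+k_2\alpha\lambda)\bigr]v^T\Gamma^i_{av}v$, which is \emph{positive} whenever $\beta+\mu<2k_1\lambda\alpha/(2k_1+k_2\alpha\lambda)$, since $\Gamma^i_{av}>0$. So with $\beta,\mu$ small your $\dot V$ is not negative definite, and no Young-inequality bookkeeping can rescue it; pushing $\mu$ up toward $\lambda\alpha$ to kill this cross term instead destroys the $\zeta^TJ\zeta$ damping and inflates the $w_3^TJ\zeta$ cross term, so the ansatz does not close for arbitrary positive gains.

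The paper avoids this by staying in the $(w_1,w_2,w_3)$ coordinates of (\ref{w_system}) and taking
\begin{equation*}
V_o=k_1w_1^T\Gamma^i_{av}w_1+\tfrac12 w_2^TJw_2+\tfrac12 k_2\alpha\lambda\,w_3^T\Gamma^i_{av}w_3+\tfrac12 w_4^2+2\beta\,w_1^TJw_2-4\beta\,w_2^TJw_3 ,
\end{equation*}
where the $-4\beta$ is chosen as exactly twice $-2\beta$ so that the two cross terms together produce $(\beta-2\beta)w_2^TJw_2=-\beta\,w_2^TJw_2$, i.e.\ genuine damping in $w_2$, while the energy part contributes the $O(1)$ damping $-k_2\alpha^2\lambda^2 w_3^T\Gamma^i_{av}w_3$ and \emph{every} cross term in $\dot V_o$ carries a factor $\beta$. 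Then Young's inequality closes uniformly in $J$ for $\beta$ small. If you want to keep your $(w_1,\zeta,w_3)$ coordinates, you must enlarge the ansatz: rewriting the paper's $V_o$ in those coordinates produces additional terms proportional to $w_3^TJw_3$ and $w_1^TJw_3$ and a $w_3^TJ\zeta$ coefficient of order $\lambda\alpha$, none of which appear in your candidate. As written, the proof does not go through.
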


\begin{proof}

In order to prove local exponential stability of equilibrium $(q,\omega,\delta)=(\bar q, 0,\frac{1}{\epsilon \lambda} \bar q)$, it suffices considering the restriction of  (\ref{dynamic_o}) (\ref{output_feedback_law}) to the open set $\mathbb{S}^{3+} \times \mathbb{R}^3 \times \mathbb{R}^4$ where $\mathbb{S}^{3+}$ was defined in (\ref{S3}); the latter restriction is given by the following reduced order system
\begin{equation} \label{closed_loop_reduced_o}
\begin{array}{rcl}
  \dot q_{v} &=& W_v(q_{v}) \omega\\
  J \dot \omega &=&  \omega^{\times} J \omega -  \epsilon^2 A_v(q_v) \Gamma^i(t) A_v(q_v)^T \left( k_1 q_v + k_2 \alpha \lambda W_r(q_v)^{T} \left(
  \left[
  \begin{array}{c}
  q_v\\
  (1-q_{v}^T q_{v})^{1/2}
\end{array}
  \right]
  -\epsilon \lambda \delta \right) \right)\\
  \dot \delta &=& \alpha \left( \left[
  \begin{array}{c}
  q_v\\
  (1-q_{v}^T q_{v})^{1/2}
\end{array}
  \right]- \epsilon \lambda \delta \right)
\end{array}
\end{equation}
where $W_v(q_v)$ and $A_v(q_v)$ were defined in equations (\ref{w_v}) and (\ref{a_v}) respectively and $W_r(q_v)$ is defined by to
\begin{equation*}
W_r(q_v)=\dfrac{1}{2}\left[\begin{array}{c}(1-q_{v}^T q_{v})^{1/2} I + q_v^{\times} \\-q_v^T\end{array}\right]
\end{equation*}

Partition $\delta\in \mathbb{R}^4$ as follows 
\begin{equation*}
\delta=[\delta_v^T\  \delta_4]^T
\end{equation*}
where clearly $\delta_v \in \mathbb{R}^3$, and consider the linear approximation of (\ref{closed_loop_reduced_o})  around $(q_v,\omega,\delta_v,\delta_4)=(0, 0,0,\frac{1}{\epsilon \lambda})$ which is given by
\begin{equation} \label{linearized_closed_loop_reduced_o}
\begin{array}{rcl}
  \dot q_{v} &=& \dfrac{1}{2} \omega\\
  J \dot \omega &=& -  \epsilon^2 \Gamma^i(t) \left( k_1 q_v + \dfrac{1}{2} k_2 \alpha \lambda (q_v-\epsilon \lambda \delta_v) \right)\\
  \dot{\delta}_v &=& \alpha (q_v- \epsilon \lambda \delta_v)\\
  \dot{\tilde{\delta}}_4 &=& -\alpha \epsilon \lambda \tilde \delta_4\\
\end{array}
\end{equation}
where $\tilde \delta_4=\delta_4-\frac{1}{\epsilon \lambda}$.
Introduce the following  state-variables' transformation 
\begin{equation*}
  z_1=q_v \ \ \ \  z_2=\omega /\epsilon \ \ \ \  z_3= q_v -  \epsilon \lambda \delta_v \ \ \ \  z_4=  \tilde \delta_4
\end{equation*}  
%\begin{equation*}
%\begin{array}{rcl}
%z_1 &=& q_v\\
%z_2 &=& \omega /\epsilon \\
%z_3 &=& q_v -  \epsilon \lambda \delta_v\\
%z_4 &=& \tilde \delta_4
% \end{array}
%\end{equation*}
with $\epsilon>0$ so that system (\ref{linearized_closed_loop_reduced_o}) is transformed into
\begin{equation} \label{linearized_time_varying_o}
\begin{array}{rcl}
  \dot z_1 &=& \dfrac{\epsilon}{2} z_2\\
  J \dot z_2 &=& - \epsilon \Gamma^i(t) \left( k_1 z_1 + \dfrac{1}{2} k_2 \alpha \lambda  z_3) \right)\\
  \dot z_3 &=& \epsilon  \left( \dfrac{1}{2} z_2- \alpha \lambda z_3 \right)\\
  \dot z_4 &=& -\epsilon  \alpha \lambda z_4
 \end{array}
\end{equation}
and consider the so called time-invariant ``average system'' of (\ref{linearized_time_varying_o})
\begin{equation} \label{averaged_closed_loop_o}
\begin{array}{rcl}
 \dot z_1 &=& \dfrac{\epsilon}{2} z_2\\
  J \dot z_2 &=& - \epsilon \Gamma^i_{av} \left( k_1 z_1 + \dfrac{1}{2} k_2 \alpha \lambda  z_3) \right)\\
  \dot z_3 &=& \epsilon  \left( \dfrac{1}{2} z_2- \alpha \lambda z_3 \right)\\
  \dot z_4 &=& -\epsilon  \alpha \lambda z_4\\
 \end{array}
\end{equation} where $\Gamma^i_{av}$ was defined in (\ref{gamma_i_av}). Thus, proceeding in a fashion perfectly parallel to the one followed in the proof of Theorem \ref{exponential_stability} it can be shown that through an appropriate coordinate transformation, system (\ref{linearized_time_varying_o}) can be seen as a perturbation of system (\ref{averaged_closed_loop_o}). Note that the correspondent of system (\ref{linearized_time_varying})  is given by
\begin{equation} \label{w_system}
\begin{array}{rcl}
  \dot w_1 &=& \dfrac{1}{2} w_2\\
  J \dot w_2 &=& -  \Gamma^i_{av} \left( k_1 w_1 + \dfrac{1}{2} k_2 \alpha \lambda  w_3) \right)\\
  \dot w_3 &=&  \dfrac{1}{2} w_2- \alpha \lambda w_3\\
  \dot w_4 &=& -  \alpha \lambda w_4
 \end{array}
\end{equation}
Then, use the following Lyapunov function
\begin{equation*}
V_o(w_{1},w_2)= k_1w_{1}^T \Gamma^i_{av} w_{1}+\frac{1}{2}  w_2^T J  w_2+\frac{1}{2} k_2 \alpha \lambda w_3^T \Gamma^i_{av} w_3+\frac{1}{2} w_4^2 + 2 \beta w_1^T J w_2 - 4 \beta w_2^T J w_3
\end{equation*} with $\beta>0$. It is relatively simple to show that if $\beta$ is small enough, then $V_o$ is positive definite for all $J$'s that satisfy Assumption \ref{uncetain_inertia}. 
Moreover, it is easy to derive that for all such $J$'s  the following holds
\begin{multline*}
\dot V_o(w_{1},w_2) \leq -2 \beta \lambda_{min}(\Gamma^i_{av}) \| w_1 \|^2 -\beta J_{min} \| w_2 \|^2 - (k_2 \alpha^2 \lambda^2  - 2 \beta k_2 \alpha \lambda) \lambda_{min}(\Gamma^i_{av}) \| w_3 \|^2\\
 -  \alpha \lambda w_4^2 + |2 \gamma k_1 - \beta k_2 \alpha \lambda | \lambda_{max}(\Gamma^i_{av}) \| w_1 \| \| w_3 \| + 2 \gamma \lambda \alpha J_{max} \| w_2 \| \| w_3 \|
\end{multline*}
Using  Young's inequalities analogous to (\ref{young_ineq}) for the last two mixed terms,, it is easy to obtain that for $\beta>0$ small enough $\dot V_o$ is negative definite for all $J$'s that satisfy Assumption \ref{uncetain_inertia}. Then, the proof can be completed by using arguments similar to those in the proof of Theorem \ref{exponential_stability}. 
\end{proof}

Considerations  similar to Remarks  \ref{robustness} through \ref{choosing_gains} apply to the proposed output feedback; in particular, in practical applications gains $\alpha$ and $\lambda$ are often chosen by trial and error.

\section{Simulations} \label{simulations}

For simulation consider a satellite whose inertia matrix is equal to
\begin{equation} \label{J}
J=\text{diag}[27\ 17 \ 25] \ \text{kg m}^2
\end{equation}
(see \cite{Lovera:2004vn}). The satellite follows a circular near polar orbit ($incl=87^{\circ}$) with orbit altitude of 450 km; the corresponding 
orbital period is about 5600 s. Without loss of generality the right ascension of the ascending node $\Omega$ is set equal to 0, whereas the initial phases $\alpha_0$ (see (\ref{m_hat})) and $\phi_0$ (see (\ref{x_p_y_p})) have been randomly selected and set equal to $\alpha_0=4.54$ rad and $\phi_0=0.94$ rad.

First, check that for the considered orbit Assumption \ref{assumption_gamma_i} is fulfilled. It was shown in Remark \ref{discuss_assumption2} that the assumption is satisfied if $\det(\Gamma^i_{av}) \neq 0$. The determinant of $1/T \int_0^T \Gamma^i(t) dt$ can be computed numerically, and it turns out that it converges to $9.23\ 10^{-28}$ for $T \rightarrow \infty$. It is of interest to compare the latter value with the value $ 9.49\ 10^{-28}$ obtained by using the analytical expression (\ref{det_Gamma_o_av_simple}) which is valid when $\theta_m$ is approximated to $180^{\circ}$.

Consider an initial state characterized by attitude equal to to the target attitude $q(0)=\bar q$, and by the following high initial angular rate
\begin{equation} \label{omega_0}
\omega(0)=[0.02\ \ 0.02\ \ -0.03]^T\ \text{rad/s}
\end{equation}

\subsection{State feedback} 

The controller's parameters of the state feedback control (\ref{control_u}) have been chosen by trial and error as follows $k_1=2 \ 10^{11}$, $k_2=3\ 10^{11}$, $\epsilon=10^{-3}$. In order to test robustness of the designed state feedback with respect to perturbations of the inertia matrix through a Monte Carlo study, it is useful to generate a random set of perturbed inertia matrices having principal moments of inertia that are in between the smallest ($J_{min}=17\ \text{kg m}^2$
) and the largest ($J_{max}=27\ \text{kg m}^2$
) principal moment of inertia of (\ref{J}). Then, each random perturbed inertia matrix has been generated as follows. First a $3 \times 3$ diagonal matrix $J_{pert\ diag}$ has been determined selecting  each diagonal element on the interval $[J_{min}\  J_{max}]$
by means of the pseudo-random number generator  rand() from Matlab\textsuperscript{TM}. Note that  matrix $J_{pert\ diag}$ satisfies the so called triangular inequalities (see \cite[Problem 6.2]{wie98}) because $2 J_{min}>J_{max}$; thus, it actually represents an inertia matrix. Next, a $3 \times 3$ rotation matrix $R$ has been randomly generated by using the  function for Matlab\textsuperscript{TM} random\_rotation() \cite{random_rotation}; finally the desired randomly generated perturbed inertia matrix has been computed as $J_{pert}=R^T J_{pert\ diag} R$.
Note that Theorem \ref{exponential_stability} guarantees that, if parameter $\epsilon=10^{-3}$ has been chosen small enough, then the desired attitude should be acquired even when the inertia matrix is equal to $J_{pert}$.

Simulations were run for the designed state feedback law using for $J$ the nominal value reported in  (\ref{J}) and each of 200 perturbed values randomly generated; the resulting plots are shown in Fig. \ref{statef_nomJ_nosat}. 
\begin{figure}[h] 
\centering
\subfigure{\includegraphics{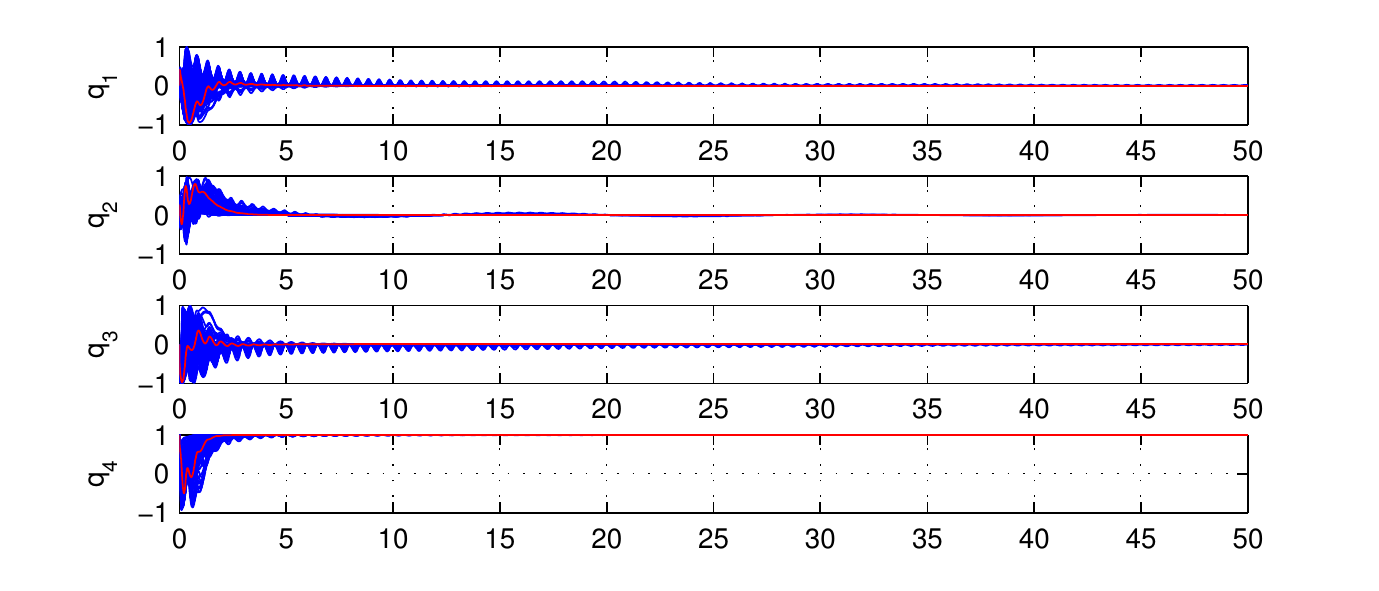}}\\[-6mm]
\subfigure{\includegraphics{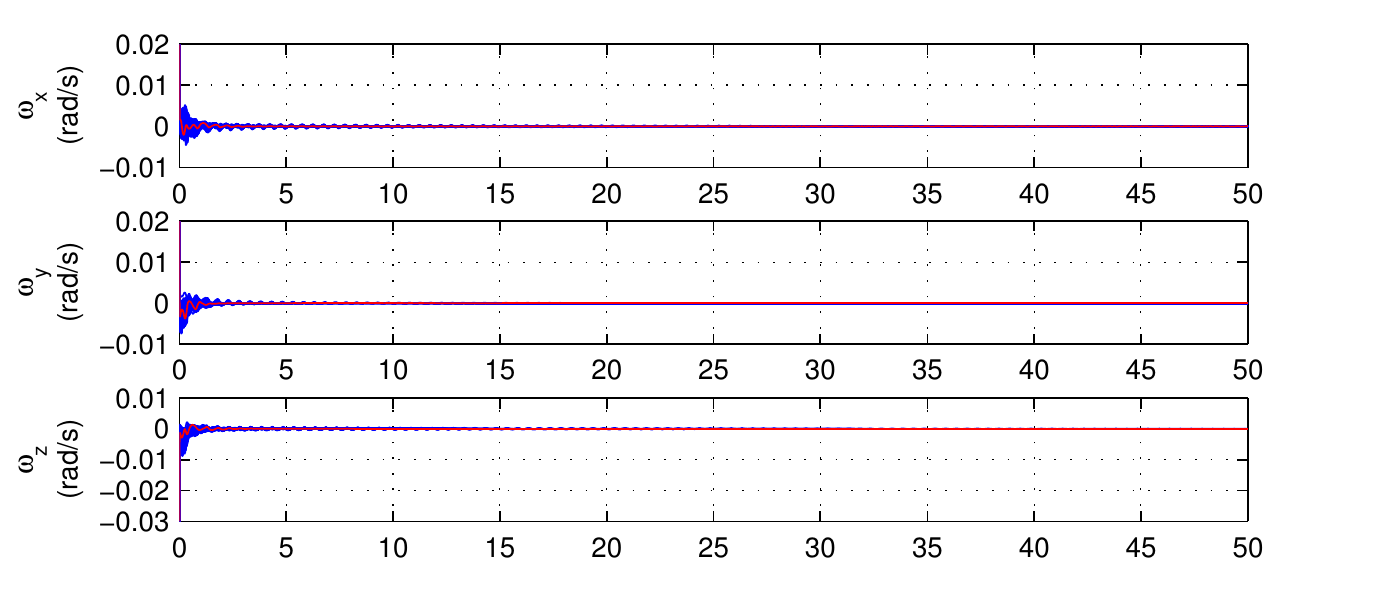}}\\[-5mm]
\subfigure{\includegraphics{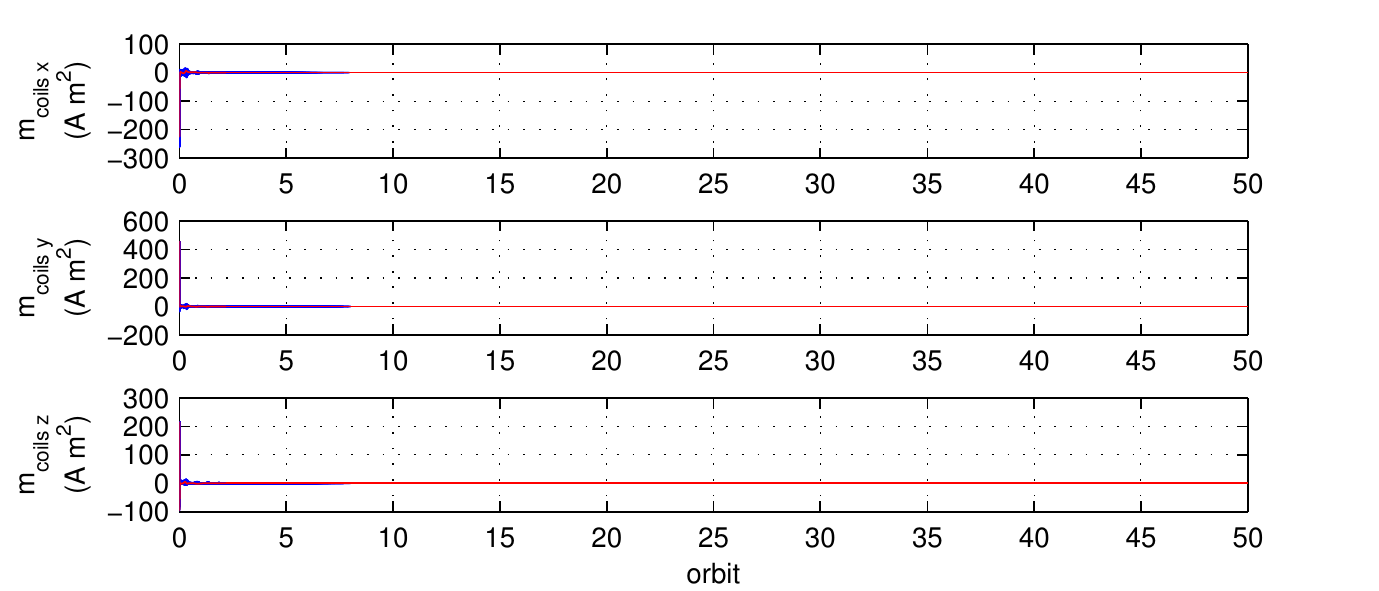}}\\[-6mm]
\caption{Evolutions with state feedback controller. Simulation with nominal inertia matrix (red lines) and Monte Carlo simulations with 200 perturbed inertia matrices (blue envelopes).}
\label{statef_nomJ_nosat}
\end{figure}
Note that asymptotic convergence to the desired attitude is achieved  even with perturbed inertia matrices; however, convergence time can become larger with respect to the nominal case.

\subsection{Output feedback}
 The values of parameters for output feedback (\ref{output_feedback_law}) have been determined by trial and error as follows $k_1=10^{11}$, $k_2=3\ 10^{11}$, $\epsilon=10^{-3}$, $\alpha=4\ 10^3$, $\lambda=1$. Similarly to the state feedback case, simulations were run using the nominal value for $J$ and each of 200 perturbed values which were randomly generated. The results are plotted in Fig. \ref{outputf_nomJ_nosat}.
\begin{figure}[h] 
\centering
\subfigure{\includegraphics{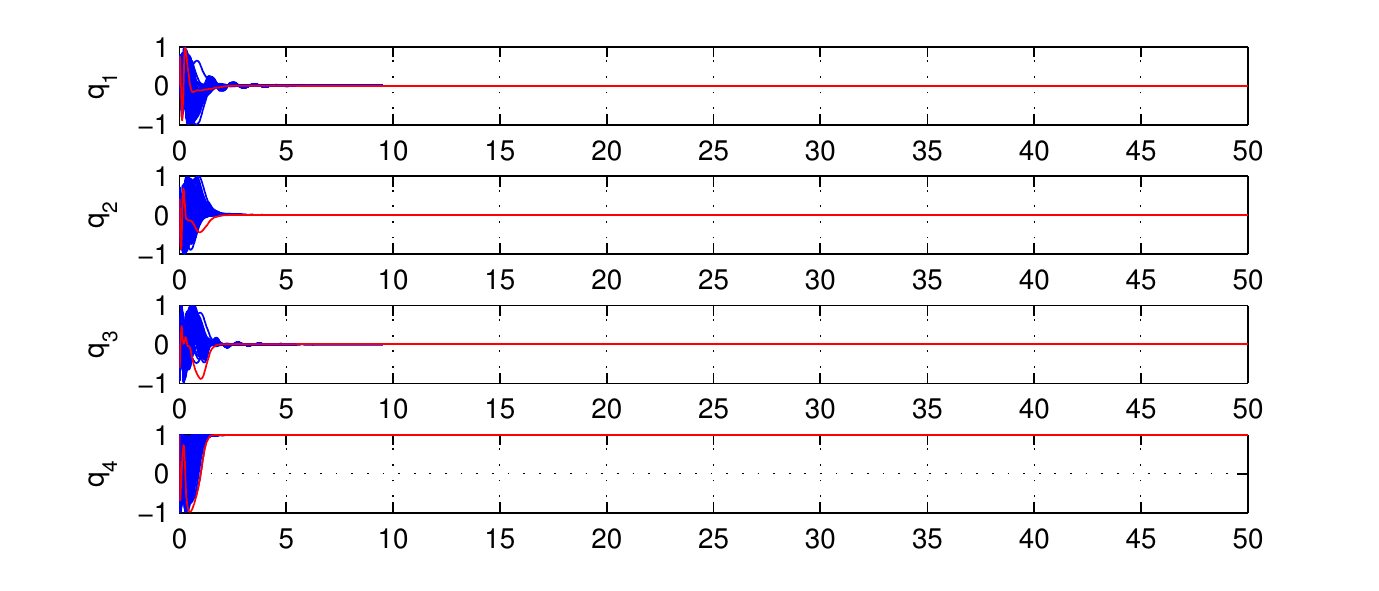}}\\[-6mm]
\subfigure{\includegraphics{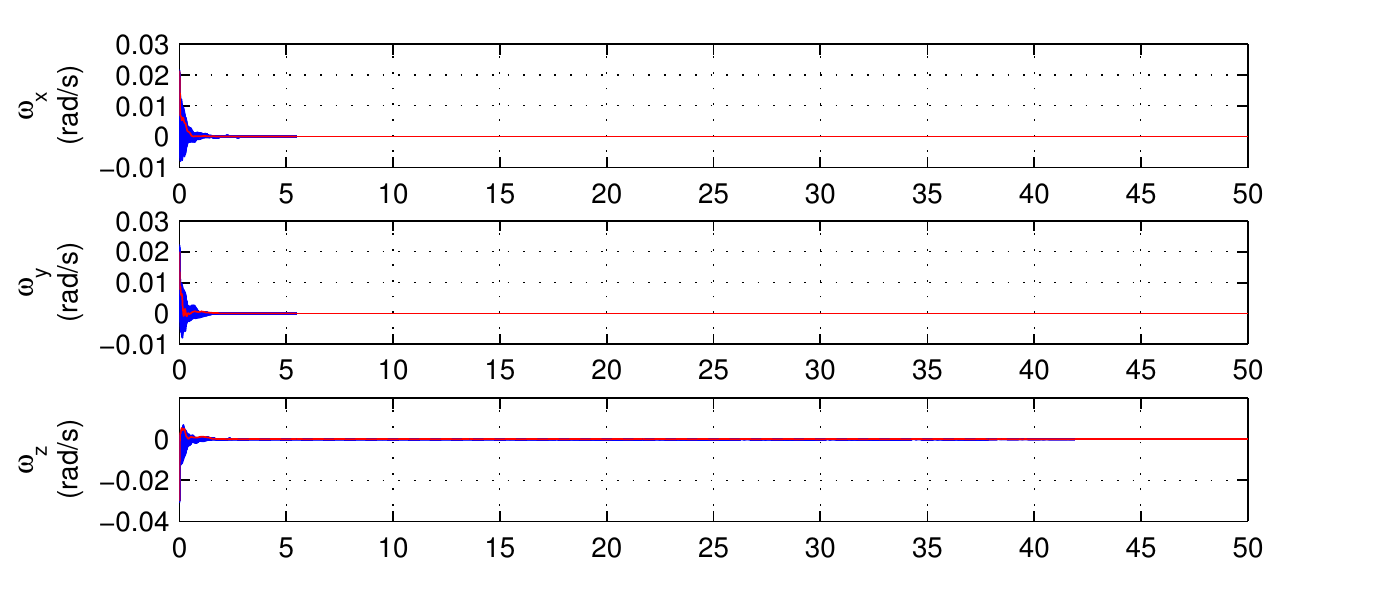}}\\[-5mm]
\subfigure{\includegraphics{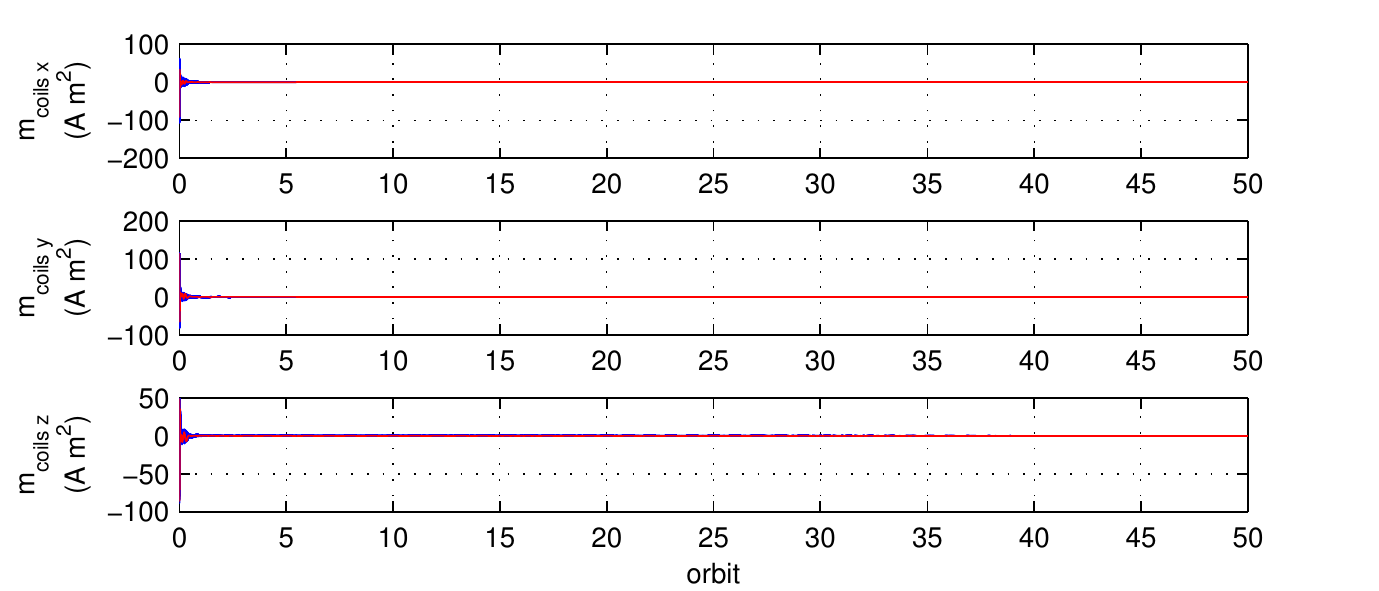}}\\[-6mm]
\caption{Evolutions with output feedback controller. Simulation with nominal inertia matrix (red lines) and Monte Carlo simulations with 200 perturbed inertia matrices (blue envelopes).}
\label{outputf_nomJ_nosat}
\end{figure}
Thus, also in the  output feedback study, it occurs that  asymptotic convergence to the desired attitude is achieved even with perturbed inertia matrices, but convergence time can become larger with respect to the nominal case.

\section{Conclusions}
Three-axis attitude controllers for inertial pointing spacecraft using only magnetorquers have been presented. An attitude plus  attitude rate feedback and an attitude only feedback  are proposed. With both feedbacks local exponential stability and robustness with respect to large inertia uncertainties are achieved. Simulation results have shown the effectiveness of the proposed control designs.

This work shows promising results for further research in the field; in particular, it would be interesting to extend the presented control algorithms to the case of Earth-pointing spacecraft.

%% The Appendices part is started with the command \appendix;
%% appendix sections are then done as normal sections
\appendix

\section{} \label{appendix}

Recall that given square matrix $X \in \mathbb{R}^{n \times n}$ with eigenvalues inside the unit circle, $I-X$ is invertible and the following holds (see \cite[Lecture 3]{Tyrtyshnikov})
\begin{eqnarray}
(I-X)^{-1} &=& \sum \limits_{i=0}^{\infty} X^i \label{series1}\\
(I-X)^{-2} &=& \sum \limits_{i=1}^{\infty} iX^{i-1} \label{series2}
\end{eqnarray}
From the previous equations the following inequalities are immediatly obtained
\begin{eqnarray}
\| (I-X)^{-1} \| &\leq& \dfrac{1}{1-\| X \|} \label{ineq1}\\[2mm]
\| (I-X)^{-2} \| &\leq& \dfrac{1}{(1-\| X \|^2)^2} \label{ineq2}
\end{eqnarray}

The previous results are useful for proving the following

\begin{lemma} \label{lemma_appendix}
Given $Y \in \mathbb{R}^{n \times n}$ and $\epsilon>0$, if $\epsilon$ is sufficiently small then there exists $0<\tilde{\epsilon}<\epsilon$ such that the  following  holds
\begin{equation*}
(I-\epsilon Y)^{-1}=I+\epsilon Y (I-\tilde{\epsilon} \ Y)^{-2}
\end{equation*} 
\end{lemma}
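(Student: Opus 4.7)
The plan is to view $F(s) := (I-sY)^{-1}$ as a matrix-valued function of the real variable $s \in [0,\epsilon]$ and read off the claimed identity as the first-order Lagrange form of Taylor's theorem $F(\epsilon) = F(0) + \epsilon F'(\tilde\epsilon)$ applied to $F$. The quantifier ``$\epsilon$ sufficiently small'' will be made precise by $\epsilon < 1/\|Y\|$, which keeps the Neumann series of the appendix in its region of convergence.

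The steps I would carry out, in order, are: (i) use (\ref{ineq1}) to conclude that $F$ is well-defined and analytic on $[0,\epsilon]$, with the convergent expansion $F(s) = \sum_{i\geq 0} s^i Y^i$ supplied by (\ref{series1}); (ii) differentiate this series termwise and recognize the result via (\ref{series2}) to obtain $F'(s) = Y(I-sY)^{-2}$; (iii) apply the fundamental theorem of calculus entrywise to write
\begin{equation*}
F(\epsilon) - I \;=\; \int_0^\epsilon Y (I-sY)^{-2}\, ds;
\end{equation*}
(iv) invoke a mean-value argument to rewrite the integral as $\epsilon\, Y(I-\tilde\epsilon Y)^{-2}$ for a single $\tilde\epsilon \in (0,\epsilon)$, yielding the stated identity.

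The hard part is step (iv): neither the Lagrange mean-value theorem nor the first mean-value theorem for integrals holds in general for matrix-valued functions (the standard counterexample being $(\cos t, \sin t)$ on $[0,2\pi]$), so some structural argument is needed. My first attempt would be to exploit the fact that each scalar entry of $F(s)$ is a rational function of $s$ with a common denominator $\det(I-sY)$: apply the scalar MVT entry-by-entry, and then use continuity and compactness of $[0,\epsilon]$, together with the special form of $F'$, to select a common $\tilde\epsilon$ after possibly shrinking $\epsilon$. A fallback, and probably the most transparent route, is to observe that the only use of this lemma in Theorem~\ref{exponential_stability} is to bound $\|S(t,\epsilon)\|$; for that purpose it suffices to prove the weaker remainder estimate $\|F(\epsilon) - I - \epsilon Y\| \leq \epsilon^2\|Y\|^2 /(1-\epsilon\|Y\|)^2$, which drops straight out of (\ref{series1}) and (\ref{ineq2}) without needing an exact Lagrange-form identity, and which still delivers a factor of the form $\|(I-\tilde\epsilon Y)^{-2}\|$ for some $\tilde\epsilon < \epsilon$ by monotonicity of $s \mapsto 1/(1-s\|Y\|)^2$.
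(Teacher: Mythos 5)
Your steps (i)--(iii) coincide with the paper's own proof: the author sets $F(\epsilon)=(I-\epsilon Y)^{-1}$, invokes ``the mean value theorem'' to write $F(\epsilon)=I+\epsilon\,\frac{dF}{d\epsilon}(\tilde\epsilon)$, and computes $F'$ by termwise differentiation of the Neumann series (\ref{series1})--(\ref{series2}). The step you isolate as the hard part, (iv), is exactly the step the paper performs without justification, and your suspicion is not a mere technicality: the Lagrange mean value theorem fails for matrix-valued functions, and here the exact identity is in fact \emph{false}. Take $Y=\mathrm{diag}(1,2)$. Matching the $(1,1)$ entries of $(I-\epsilon Y)^{-1}$ and $I+\epsilon Y(I-\tilde{\epsilon}Y)^{-2}$ forces $(1-\tilde{\epsilon})^{2}=1-\epsilon$, i.e.\ $\tilde{\epsilon}=1-\sqrt{1-\epsilon}=\tfrac{\epsilon}{2}+\tfrac{\epsilon^{2}}{8}+O(\epsilon^{3})$, while matching the $(2,2)$ entries forces $\tilde{\epsilon}=\tfrac{1}{2}\bigl(1-\sqrt{1-2\epsilon}\bigr)=\tfrac{\epsilon}{2}+\tfrac{\epsilon^{2}}{4}+O(\epsilon^{3})$; a short computation shows these agree only at $\epsilon=0$, so for every small $\epsilon>0$ no single $\tilde{\epsilon}$ exists. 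This also rules out your first repair attempt: the entrywise mean values genuinely cannot be merged into a common one, no matter how $\epsilon$ is shrunk.

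Your fallback is the correct resolution, and it is all the application requires. Lemma \ref{lemma_appendix} is invoked only to show that the lower-right block of $S(t,\epsilon)$ is bounded uniformly in $t$ for small $\epsilon$; for that it suffices to know that $(I-\epsilon Y)^{-1}=I+\epsilon\,G(\epsilon)$ with $G$ bounded, which follows from the algebraic identity $(I-\epsilon Y)^{-1}-I=\epsilon Y(I-\epsilon Y)^{-1}$ together with (\ref{ineq1}), giving $\|G(\epsilon)\|=\|Y(I-\epsilon Y)^{-1}\|\le \|Y\|/(1-\epsilon\|Y\|)$ whenever $\epsilon\|Y\|<1$. No mean value argument, no $\tilde{\epsilon}$, and no termwise differentiation is needed, and the remainder of the proof of Theorem \ref{exponential_stability} goes through verbatim with $k_2 J^{-1}\Delta(t)(I-\epsilon k_2 J^{-1}\Delta(t))^{-1}$ in place of the corresponding block of $S(t,\epsilon)$. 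So: same route as the paper for (i)--(iii), a correctly diagnosed gap at (iv) that is actually fatal to the lemma as stated, and a fallback that is the right fix.
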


\begin{proof}
Let $F(\epsilon)=(I-\epsilon Y)^{-1}$. By the mean value theorem, there exists $0<\tilde{\epsilon}<\epsilon$ such that the  following  holds
\begin{equation*}
F(\epsilon)=I+\dfrac{d F}{d \epsilon}(\tilde{\epsilon}) \epsilon
\end{equation*} By using (\ref{series1}) and (\ref{series2}) it follows that for $\epsilon$ small enough
\begin{equation*}
\dfrac{d F}{d \epsilon}(\epsilon)=\dfrac{d}{d \epsilon}\left[ \sum \limits_{i=0}^{\infty} (\epsilon Y)^i \right] = Y \sum \limits_{i=1}^{\infty} i (\epsilon Y)^{i-1}=Y (I-\epsilon \ Y)^{-2}
\end{equation*}
\end{proof}
%% References
%%
%% Following citation commands can be used in the body text:
%% Usage of \cite is as follows:
%%   \cite{key}         ==>>  [#]
%%   \cite[chap. 2]{key} ==>> [#, chap. 2]
%%

%% References with bibTeX database:

\bibliographystyle{elsarticle-num}
\bibliography{biblio}

%% Authors are advised to submit their bibtex database files. They are
%% requested to list a bibtex style file in the manuscript if they do
%% not want to use elsarticle-num.bst.

%% References without bibTeX database:

% \begin{thebibliography}{00}

%% \bibitem must have the following form:
%%   \bibitem{key}...
%%

% \bibitem{}

% \end{thebibliography}

\end{document}